\theoremstyle{plain}
\newtheorem{theorem}{Theorem}[section]
\newtheorem{corollary}[theorem]{Corollary}
\newtheorem{lemma}[theorem]{Lemma}
\newtheorem{conjecture}[theorem]{Conjecture}
\theoremstyle{definition}
\newtheorem{example}[theorem]{Example}
\newtheorem{remark}[theorem]{Remark}
\numberwithin{equation}{section}
\newcommand{\PP}{\mathbb{P}}
\newcommand{\Z}{\mathbb{Z}}
\newcommand{\R}{\mathbb{R}}
\newcommand{\Q}{\mathbb{Q}}
\newcommand{\F}{\mathbb{F}}
\DeclareMathOperator{\alt}{alt}
\DeclareMathOperator{\var}{var}
\DeclareMathOperator{\Gr}{Gr}
\DeclareMathOperator{\GL}{GL}
\begin{document}

\title{A finite totally nonnegative Grassmannian}


\author{John Machacek}
     \address{Department of Mathematics and Computer Science, Hampden-Sydney College, USA}
     \email{jmachacek@hsc.edu}

\subjclass[14M15]{05A15, 05E14, 14G15}
\keywords{positivity, Grassmannians, point counting}



\begin{abstract}
We introduce totally nonnegative Grassmannians over finite fields where an element of a finite field is nonnegative if it is a square of an element of the finite field.
Explicit point counts are given in some special cases where we find new interpretations of sequences in the On-Line Encyclopedia of Integer Sequences (OEIS).
We compare and contrast the theory of totally nonnegative Grassmannians over a finite field with the traditional case of the field of real numbers.
\end{abstract}
\maketitle

\section{Introduction}
Postnikov~\cite{Pos} has given an elementary and fruitful framework to study total positivity of Grassmannians over the real field using the positivity of maximal minors.
The real Grassmannian is an important special case of total positivity in semisimple algebraic groups first introduced by Lusztig~\cite{Lus}.
We refer the interested reader to recent work of Bloch and Karp~\cite{BlochKarp} for a history and comparison of Postnikov's and Lusztig's approaches.
Our aim is to consider total positivity of Grassmannians over finite fields in the spirit of Cooper--Hanna--Whitlatch~\cite{posdef}.
Our results will deal with counting points in the finite field versions of totally nonnegative Grassmannians.
Galashin and Lam~\cite[Corollary 1.2]{GalashinLam} have provided point counts over finite fields of what are known as (open) positroid varieties.
This a relevant result, but not directly related to our work as positroid varieties over finite fields are defined by equations coming from positivity with real numbers while our focus is on a different idea of positivity coming from finite fields.

We will use $\F_q$ to denote the finite field with $q$ elements where $q=p^r$ for some prime $p$ and integer $r > 0$.
For an element $a \in \F_q$ we write $a \geq 0$ if $a = x^2$ has a solution in $\F_q$.
Notice $a \geq 0$ depends on the field $\F_q$; so one could want to use a notation like $a \geq_{\F_q} 0$, but we will instead simply write $a \geq 0$ as which ambient field we are using will be clear from context.
We write $a > 0$ to mean $a \geq 0$ and $a \neq 0$.
We use $a<0$ and $a\leq 0$ to mean the logical negations of $a \geq 0$ and $a > 0$ respectively.

It is not true in general that $a > 0$ implies that $-a < 0$.
For example, in $\F_5$ we have $1 > 0$ and $-1 > 0$ since $-1 = 4 = 2^2$.
Also, it is not true that this notion of positivity respects addition.
The sum of two positive elements may be negative or $0$.
Indeed, $1 > 0$ in any finite find, but $1$ generates any finite field via addition.
For example, in $\F_3$ with have $1+1 = 2 =  -1 < 0$.
However, multiplication respects this notion of positivity.
That is, $ab > 0$ if and only if $a,b > 0$ or $a,b < 0$.

We will focus throughout on the case that $p$ is odd since for any $r \geq 1$ all elements of $\F_{2^r}$ are nonnegative.
It is known that
\begin{align*}
    | \{a \in \F_q : a \geq 0\}| &= \frac{q+1}{2} & |\{a \in \F_q : a > 0\}| &= \frac{q-1}{2}
\end{align*}
for any $q = p^r$ with $p$ odd.
This is because $a \mapsto a^2$ is a homomorphism of the multiplicative group $\F_q^{\times} = \F_q \setminus \{0\}$, and in odd characteristic the kernel has size $2$.
The notations
\begin{align*}
    \F_q^{\geq 0} &= \{a \in \F_q : a \geq 0\}  & \F_q^{> 0} = \{a \in \F_q : a > 0\}\\
    \F_q^{\leq 0} &= \{a \in \F_q : a \leq 0\}  & \F_q^{< 0} = \{a \in \F_q : a < 0\}\\
\end{align*}
will be used.



A \emph{positive-definite matrix} is a symmetric $n \times n$ matrix $A$ over the real numbers $\R$ so that $v^TAv > 0$ for all column vectors $v \in \R^n$ with $v \neq 0$.
All eigenvalues being positive as well as all leading principal minors being positive are both alternative conditions that characterize positive definite matrices.
Recently, Cooper--Hanna--Whitlatch~\cite{posdef} studied positive-definite matrices over finite fields using the same notion of positivity in finite fields we have just defined.

In our present work, we will explore positivity in Grassmannians over finite fields.
First, let us review some basic properties of Grassmannians where $\F$ is any field.
For any $0 \leq k  \leq n$ we have the \emph{Grassmannian} $\Gr_{k,n}(\F) = \{V \leq \F^n : \dim(V) = k\}$ which is the set all $k$-dimensional subspaces of $\F^n$.
For any $V \in \Gr_{k,n}(\F)$ we can choose a full rank $k \times n$ matrix $A$ such that $V$ is the row span of $A$.
When this is the case we write $[A] = V$.
If $A$ and $B$ are two $k\times n$ matrices, then $[A] = [B]$ if and only if there exists $M \in \GL_k(\F)$ such that $MA = B$.
This exactly means the rows of $A$ and $B$ each form a basis of $V$ and $M$ encodes the change of basis between them.

We let $[n] = \{1,2,\dots,n\}$ and let $\binom{[n]}{k}$ denote the collection of all $k$-element subsets of $[n]$.
For any $I \in \binom{[n]}{k}$ we let $\Delta_I(A)$ denote the \emph{Pl\"ucker coordinate} that is given by the maximal minor of $A$ with columns indexed by $I$.
If $[A] = [B]$ where $MA = B$ then $\Delta_I(B) = \det(M)\Delta_I(A)$ for all $I \in \binom{[n]}{k}$.
Hence, the Pl\"ucker coordinates are well-defined when viewed as projective coordinates.
This means that we can use the notation $\Delta_I(V)$ for any $V \in \Gr_{k,n}(\F)$ and this is well defined up to simultaneous rescaling of all the Pl\"ucker coordinates by a fixed nonzero element of the field.

We now define the \emph{totally nonnegative finite Grassmannian} to be
\[\Gr_{k,n}^{\geq 0}(\F_q) = \{V \in \Gr_{k,n}(\F_q): \Delta_I(A) \geq 0 \text{ for all } I \in \binom{[n]}{k} \text{ for some $A$ with } [A] = V\}\]
for any $0 \leq k \leq n$ and finite field $\F_q$.
This is exactly the definition Postnikov~\cite{Pos} makes with $\R$ in place of $\F_q$.

\begin{remark}
We can define the \emph{totally positive finite Grassmannian} to be
\[\Gr_{k,n}^{> 0}(\F_q) = \{V \in \Gr_{k,n}(\F_q): \Delta_I(A) > 0 \text{ for all } I \in \binom{[n]}{k} \text{ for some $A$ with } [A] = V\}\]
though this is typically empty. Indeed for any $q$ and $k>1$ there exists $N$ so that $|\Gr_{k,n}^{> 0}(\F_q)| = 0$ for all $n \geq N$.
This is because with $n$ sufficiently large building a full rank $k \times n$ matrix taking columns from $\F_q^k$ we will necessarily have a repeated column causing some Pl\"ucker coordinates to vanish.
\end{remark}

If $\F_q$ has even characteristic the totally nonnegative finite Grassmannian is just the entire Grassmannian.
Also, for any $V \in \Gr_{k,n}^{\geq 0}(\F_q)$ it is the case that if $[B] = V$ then either $\Delta_I(B) \geq 0$ for all $I \in \binom{[n]}{k}$ or else $\Delta_I(B) \leq 0$ for all $I \in \binom{[n]}{k}$.
This follows from the fact that $\Delta_I(B) = \det(M) \Delta_I(A)$ along with the fact that the product of two field elements is positive if and only if both are positive or both are negative.
In particular, we may assume when convenient that $[A] = V$ and $A$ is in \emph{reduced row echelon form (RREF)}.
In this situation we must have $\Delta_I(A) = 1 > 0$ for some $I$ (which indexes the pivots of $A$), and thus $V \in \Gr_{k,n}^{\geq 0}(\F_q) $ if and only if all Pl\"ucker coordinates are nonnegative when computed via $A$.

Before proceeding, we recall a more classical notion of positivity for matrices and explain how it relates to the totally nonnegative Grassmannian.
A matrix is said to be \emph{totally nonnegative} if all of its minors are nonnegative.
This is a stronger condition than merely representing a point in the totally nonnegative Grassmannian, which only requires that the maximal minors be nonnegative.
However, there exists an embedding that maps a $k \times (n-k)$ totally nonnegative matrix to a point in the totally nonnegative Grassmannian of $k$-dimensional subspaces of an $n$-dimensional space.
This map, which can be defined over any field, takes a $k \times (n-k)$ totally nonnegative matrix and scales the $i$th row by $(-1)^{k-i}$ then prepends a $k \times k$ identity matrix to it~\cite[Lemma 3.9]{Pos}.

In Section~\ref{sec:orth} we consider orthogonal complements and use them to observe the symmetry $|\Gr^{\geq 0}_{k,n}(\F_q)| = |\Gr^{\geq 0}_{n-k,n}(\F_q)|$ for any $0 \leq k \leq n$ in Theorem~\ref{thm:alt}.
Subfields are used to construct elements of totally nonnegative finite Grassmannians in Section~\ref{sec:subfield}.
The projective space case is easy and dealt with in Section~\ref{sec:proj} where we enumerate $|\Gr^{\geq 0}_{1,n}(\F_q)|$ in Theorem~\ref{thm:proj}.
In Section~\ref{sec:planes} we look at $k=2$ and enumerate $|\Gr^{\geq 0}_{2,n}(\F_q)|$ for $q=3$ and $q=5$ in Theorem~\ref{thm:2nF3} and Theorem~\ref{thm:2nF5} respectively.
These enumeration results include formulas and generating functions as well as a connection to coefficients of Chebyshev polynomials in the case of $|\Gr^{\geq 0}_{2,n}(\F_3)|$.
In Section~\ref{sec:examples}, we give various examples demonstrating how various results that are known about the totally nonnegative real Grassmannian change when considering finite fields.

\section{Orthogonal complements}
\label{sec:orth}
In this section, we start by working with an arbitrary field.
Let $\F$ be any field, for any $0 \leq k \leq n$ there is a duality between $\Gr_{k,n}(\F)$ and $\Gr_{n-k,n}(\F)$ via orthogonal complements.
In this section, we explain how this duality can also be made to work on the totally nonnegative parts.
For any subspace $V \in \Gr_{k,n}(\F)$ so that $V = [A]$ for a full rank $k \times n$ matrix $A$ there is the subspace $V^{\perp} \in \Gr_{n-k,n}(\F)$ which can be defined to be the null space of the matrix $A$.
The map $\perp:\Gr_{k,n}(\F) \to \Gr_{n-k,n}(\F)$ is an involution (i.e. $(V^{\perp})^{\perp} = V$), and hence a bijection implying that $|\Gr_{k,n}(\F)| = |\Gr_{n-k,n}(\F)|$.
To work with the totally nonnegative part we define $\alt: \F^n \to \F^n$ by
\[\alt((v_1, v_2, v_3, v_4, \dots, v_n)) = (v_1, -v_2, v_3, -v_4, \dots, (-1)^{n-1}v_n)\]
for any $(v_1, v_2, v_3, v_4, \dots, v_n) \in \F^n$.
The map $\alt$ when $\F = \R$ was used by Karp~\cite{karp} in the study on the totally nonnegative real Grassmannian.
For a subspace $V$ we let $\alt(V) = \{\alt(v): v \in V\}$ and for a matrix $A$ we let $\alt(A)$ denote the matrix where $\alt$ has been applied to the rows for $A$.
In this way it is the case that $\alt(V) = [\alt(A)]$ when $V = [A]$.

\begin{lemma}[{\cite[Section 7]{Hoch} see also \cite[Lemma 1.11 (ii)]{karp}}]
Given any $0 \leq k \leq n$ and any field $\F$, as projective coordinates, $\Delta_{I}(\alt(V^{\perp})) = \Delta_{[n] \setminus I}(V)$ for all $I \in \binom{[n]}{n-k}$ for any $V \in \Gr_{k,n}(\F)$.
That is, if $V = [X]$ and $\alt(V^{\perp}) = [Y]$ for some chosen matrix representatives, then there exists a fixed nonzero $\alpha \in \F^{\times}$ so that the determinants $\Delta_{I}(Y) = \alpha \Delta_{[n] \setminus I}(X)$ for all $I \in \binom{[n]}{n-k}$.
\label{lem:altprep}
\end{lemma}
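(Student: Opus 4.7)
The plan is to combine two classical identities and verify a single parity cancellation. The first identity is the Plücker duality for orthogonal complements: if $V=[X]$ for a $k\times n$ matrix $X$, then there exists an $(n-k)\times n$ matrix $Y_{0}$ with $[Y_{0}]=V^{\perp}$ and
\[\Delta_{J}(Y_{0})=(-1)^{\sigma(J)}\Delta_{[n]\setminus J}(X)\]
for every $J\in\binom{[n]}{n-k}$, where $(-1)^{\sigma(J)}$ is the sign of the permutation sorting the concatenation of $[n]\setminus J$ and $J$ (each written in increasing order) into $(1,2,\dots,n)$. I would verify this by first reducing to $X=[I_{k}\mid A]$ using the $\GL_{k}(\F)$ action, taking $Y_{0}=[-A^{T}\mid I_{n-k}]$ (which manifestly satisfies $XY_{0}^{T}=0$ and has full rank), and then expanding each relevant minor by Laplace; equivalently, this is the Hodge-star identity on $\Lambda^{\bullet}(\F^{n})$.

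The second identity records how $\alt$ acts on Plücker coordinates: since $\alt$ rescales column $i$ of any matrix by $(-1)^{i-1}$, for any $(n-k)\times n$ matrix $N$ we have
\[\Delta_{J}(\alt(N))=(-1)^{\sum_{j\in J}(j-1)}\Delta_{J}(N).\]
Applying this to $N=Y_{0}$ gives a particular matrix $Y_{1}=\alt(Y_{0})$ representing $\alt(V^{\perp})$ with
\[\Delta_{J}(Y_{1})=(-1)^{\sigma(J)+\sum_{j\in J}(j-1)}\Delta_{[n]\setminus J}(X).\]

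The crux is to show that $\sigma(J)+\sum_{j\in J}(j-1)$ depends only on $n$ and $k$ modulo $2$. Writing $J=\{j_{1}<\cdots<j_{n-k}\}$, a direct inversion count gives $\sigma(J)\equiv k(n-k)+\binom{n-k+1}{2}-\sum_{t}j_{t}\pmod{2}$, while $\sum_{j\in J}(j-1)=\sum_{t}j_{t}-(n-k)$, so the $\sum_{t}j_{t}$ contributions cancel and the total parity reduces to a constant $c(n,k)$. Set $\alpha_{0}:=(-1)^{c(n,k)}\in\{\pm 1\}$. For an arbitrary matrix representative $Y$ of $\alt(V^{\perp})$, write $Y=MY_{1}$ with $M\in\GL_{n-k}(\F)$ and take $\alpha:=\det(M)\,\alpha_{0}\in\F^{\times}$; then $\Delta_{J}(Y)=\alpha\,\Delta_{[n]\setminus J}(X)$ for all $J$, which is the claim.

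I expect the main obstacle to be pinning down the first identity with the correct sign conventions, since the sign formulas for complementary Plücker coordinates are notoriously finicky and depend on how one orders the basis of $V^{\perp}$; once that is settled, the parity cancellation in the third paragraph and the passage to an arbitrary representative are entirely routine.
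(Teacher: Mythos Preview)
Your proposal is correct and follows the same strategy as the paper's own (commented-out) proof: reduce to $X=[I_k\mid M]$, take an explicit matrix representative of $V^{\perp}$, track the sign coming from complementary minors together with the sign coming from $\alt$, and verify that their combined parity is independent of $J$. Your packaging via the inversion count $\sigma(J)$ is somewhat cleaner than the paper's bare-hands Laplace expansions, but the content is the same; note too that in the final version the paper omits the proof entirely and simply cites the result.
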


This lemma implies the following theorem.
In the context of our totally nonnegative Grassmannians for finite fields, it means that for fixed $n$ the sequence $(|\Gr^{\geq 0}_{k,n}(\F_q)|)_{0 \leq k \leq n}$ is palindromic.
Examples for $\F_3$ and $\F_5$ can be seen in Table~\ref{tbl:F3} and Table~\ref{tbl:F5} respectively.

\begin{table}
\begin{minipage}{.45\linewidth}
\centering
\begin{tabular}{c|ccccccc}
\diagbox{$n$}{$k$} & $0$&$1$&$2$&$3$&$4$&$5$&$6$\\ \hline
$0$ &$1$ \\
$1$ & $1$ & $1$ \\
$2$ & $1$ & $3$ & $1$ \\
$3$ & $1$ & $7$ & $7$ & $1$ \\
$4$ & $1$ & $15$ & $32$ & $15$ & $1$ \\
$5$ & $1$ & $31$ & $120$ & $120$ & $31$ & $1$ \\
$6$ & $1$ & $63$ & $400$ & $703$ & $400$ & $63$ & $1$
\end{tabular}
\caption{The cardinalities $|\Gr^{\geq 0}_{k,n}(\F_3)|$.} 
\label{tbl:F3}
\end{minipage}
\hfill
\begin{minipage}{.45\linewidth}
\centering
\begin{tabular}{c|cccccc}
\diagbox{$n$}{$k$} & $0$&$1$&$2$&$3$&$4$&$5$\\ \hline
$0$ & $1$ \\
$1$ & $1$ & $1$ \\
$2$ & $1$ & $4$ & $1$ \\
$3$ & $1$ & $13$ & $13$ & $1$ \\
$4$ & $1$ & $40$ & $122$ & $40$ & $1$ \\
$5$ & $1$ & $121$ & $1010$ & $1010$ & $121$ & $1$ \\
\end{tabular}
\caption{The cardinalities $|\Gr^{\geq 0}_{k,n}(\F_5)|$.} 
\label{tbl:F5}
\end{minipage}
\end{table}
 
\begin{theorem}
For any $0 \leq k \leq n$ and any finite field $\F_q$ the map 
\[\alt \circ \perp: \Gr^{\geq 0}_{k,n}(\F_q) \to \Gr^{\geq 0}_{n-k,n}(\F_q)\] 
is a well-defined bijection and thus 
\[|\Gr_{k,n}^{\geq 0}(\F_q)| = |\Gr^{\geq 0}_{n-k,n}(\F_q)|.\]
\label{thm:alt}
\end{theorem}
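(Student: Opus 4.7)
The plan is to use Lemma~\ref{lem:altprep} to control how Plücker coordinates transform under $\alt \circ \perp$, then separately argue well-definedness (the image has a representative with nonnegative Plücker coordinates) and bijectivity (the map is its own inverse).

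For well-definedness, fix $V \in \Gr^{\geq 0}_{k,n}(\F_q)$ and choose a representative $X$ with $\Delta_I(X) \geq 0$ for all $I \in \binom{[n]}{k}$. Pick any representative $Y$ with $[Y] = \alt(V^{\perp})$. Lemma~\ref{lem:altprep} produces a fixed $\alpha \in \F_q^{\times}$ such that $\Delta_I(Y) = \alpha \Delta_{[n] \setminus I}(X)$ for every $I \in \binom{[n]}{n-k}$. If $\alpha > 0$, then each $\Delta_I(Y) \geq 0$, since the product of two nonnegative elements is nonnegative. If $\alpha < 0$, replace $Y$ by $Y' = MY$ for some $M \in \GL_{n-k}(\F_q)$ with $\det(M) < 0$; such an $M$ exists because in odd characteristic $\F_q^{\times}$ contains a nonsquare $c$, so we may take $M = \mathrm{diag}(c, 1, \dots, 1)$. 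Then $\Delta_I(Y') = \det(M)\alpha \Delta_{[n] \setminus I}(X)$, and $\det(M)\alpha > 0$, so all Plücker coordinates of $Y'$ are nonnegative. In even characteristic every element is nonnegative, so this step is vacuous. Either way, $\alt(V^{\perp}) \in \Gr^{\geq 0}_{n-k,n}(\F_q)$, proving well-definedness.

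For bijectivity, observe that $\alt$ preserves the standard bilinear form $\langle u, v \rangle = \sum_i u_i v_i$ on $\F_q^n$ because it scales the $i$th entry by $(-1)^{i-1}$ and $(-1)^{2(i-1)} = 1$. Consequently $\alt(W^{\perp}) = \alt(W)^{\perp}$ for any subspace $W$, i.e., $\perp \circ \alt = \alt \circ \perp$ when viewed on the appropriate Grassmannians. Since both $\perp$ and $\alt$ are involutions, this gives $(\alt \circ \perp)^2 = \alt \circ \alt \circ \perp \circ \perp = \mathrm{id}$, so $\alt \circ \perp$ is a self-inverse bijection between $\Gr_{k,n}(\F_q)$ and $\Gr_{n-k,n}(\F_q)$ that restricts to a bijection between their totally nonnegative parts by the previous paragraph. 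The main obstacle is the sign-rescaling in well-definedness: because Plücker coordinates are defined only up to a common scalar and because the scalar $\alpha$ from Lemma~\ref{lem:altprep} may be negative, one must exhibit some matrix representative of $\alt(V^{\perp})$ whose Plücker coordinates are all simultaneously nonnegative, which is where the existence of a nonsquare in $\F_q^{\times}$ (odd characteristic) enters.
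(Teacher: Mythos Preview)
Your proof is correct and follows essentially the same approach as the paper's: both invoke Lemma~\ref{lem:altprep} to transfer nonnegativity of Pl\"ucker coordinates under $\alt \circ \perp$, and both use that $\alt$ and $\perp$ are involutions to get a bijection on the ambient Grassmannians. You supply more detail than the paper does, in particular the explicit rescaling by a matrix of nonsquare determinant when the scalar $\alpha$ from Lemma~\ref{lem:altprep} is negative, and your bijectivity argument establishes the slightly stronger fact that $\alt$ and $\perp$ commute (so $\alt \circ \perp$ is an involution), whereas the paper simply observes that $\perp \circ \alt$ is the inverse of $\alt \circ \perp$ because each factor is an involution.
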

\begin{proof}
The map
\[\alt \circ \perp: \Gr_{k,n}(\F) \to \Gr_{n-k,n}(\F)\]
is a bijection with inverse
\[\perp \circ \alt: \Gr^{\geq 0}_{n-k,n}(\F) \to \Gr^{\geq 0}_{k,n}(\F)\]
since both $\perp$ and $\alt$ are involutions.
By Lemma~\ref{lem:altprep} it follows that this bijection restricts to a well-defined bijection on the totally nonnegative parts.
Hence, the cardinalities in question must be equal.
\end{proof}

We now give a natural conjecture. It is well known that the sequence $\left(|\Gr_{k,n}(\F_q)|\right)_{0 \leq k \leq n}$ is both palindromic and log-concave.
It is clear that all cardinalities are nonzero; so, log-concavity is a stronger property implying unimodality.
For more on unimodal and log-concave sequences, including explicit definitions as well as discussion of $|\Gr_{k,n}(\F_q)|$, an interested reader can see the survey~\cite{unilog}.

\begin{conjecture}
    For $q = p^r$ with $p$ an odd prime the sequence $\left(|\Gr_{k,n}^{\geq 0}(\F_q)|\right)_{0 \leq k \leq n}$ is log-concave.
\end{conjecture}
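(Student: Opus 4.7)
The plan is to decouple the two claims: first establish unimodality, then try to upgrade to log-concavity. Because Theorem~\ref{thm:alt} makes the sequence palindromic and every term is visibly nonzero (the RREF matrix with any prescribed pivot set $I$ and all free entries equal to $0$ lies in $\Gr_{k,n}^{\geq 0}(\F_q)$), unimodality reduces to the monotonicity
\[
|\Gr_{k,n}^{\geq 0}(\F_q)| \leq |\Gr_{k+1,n}^{\geq 0}(\F_q)| \qquad \text{for } 0 \leq k < n/2,
\]
and log-concavity is equivalent to producing an injection
\[
\Gr_{k-1,n}^{\geq 0}(\F_q) \times \Gr_{k+1,n}^{\geq 0}(\F_q) \hookrightarrow \Gr_{k,n}^{\geq 0}(\F_q) \times \Gr_{k,n}^{\geq 0}(\F_q).
\]

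For the monotonicity step I would try to build an explicit injection by adding a canonical row to a RREF representative. Given $V = [A] \in \Gr_{k,n}^{\geq 0}(\F_q)$ with pivot set $I \subsetneq [n]$ and all $\Delta_J(A) \geq 0$, pick a canonical non-pivot column $j \in [n] \setminus I$ and append $e_j$. After re-sorting into RREF the new Plücker coordinates are, by Laplace expansion along the inserted row, $\Delta_J(A') = (-1)^{1+\ell}\,\Delta_{J \setminus \{j\}}(A)$ whenever $J \ni j$ (where $\ell$ is the rank of $j$ within $J$), and they vanish otherwise. The subtlety is that $(-1)^{1+\ell}$ is in general not uniform in $J$, so a naive choice of $j$ can break nonnegativity. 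The choice $j = \min([n] \setminus I)$ places $j$ at position~$1$ whenever $I \subseteq \{j+1, \dots, n\}$, and then all signs are $+1$; the remaining case must be handled by a different canonical row, possibly combined with a global column sign flip that is legitimate because nonnegativity is invariant under rescaling by a nonzero square and (when $-1$ is a square) by $-1$.

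For the stronger log-concave inequality, the natural attempt is to adapt the Butler-style injection known for $q$-binomial coefficients, matching a pair $(U, W)$ with $\dim U = k-1$ and $\dim W = k+1$ to a pair of $k$-dimensional subspaces through intersections, sums, and controlled row operations, and then verifying that positivity of Plücker coordinates is preserved on both outputs. The main obstacle is precisely what is emphasized at the start of the paper: positivity in $\F_q$ is incompatible with addition, so a sum of nonnegative elements can be negative, and the row operations that drive Butler-type arguments or the cell-decomposition proofs over $\R$ will typically destroy positivity of the new Plücker coordinates. No positroid-style stratification of $\Gr_{k,n}^{\geq 0}(\F_q)$ is known, and the counterexamples in Section~\ref{sec:examples} show that several structural tools from the real case do not survive. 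A realistic intermediate goal is therefore to verify the conjecture for small $q$ by combining Theorem~\ref{thm:proj}, Theorem~\ref{thm:2nF3}, Theorem~\ref{thm:2nF5}, and the palindromic symmetry of Theorem~\ref{thm:alt}, which already settles every case with $n \leq 5$ when $q \in \{3,5\}$ and serves as evidence for the general statement.
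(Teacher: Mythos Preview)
This statement is labeled a \emph{conjecture} in the paper and no proof is offered there; it is presented as an open problem immediately after Theorem~\ref{thm:alt}. So there is no ``paper's proof'' to compare against, and the relevant question is only whether your proposal actually establishes the claim. It does not, and you effectively concede as much by the final paragraph.

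Concretely, the monotonicity injection is not completed. Appending the row $e_j$ to a RREF representative $A$ gives, for $J\ni j$ with $j$ in position $\ell$ inside $J$, the new minor $(-1)^{(k+1)+\ell}\Delta_{J\setminus\{j\}}(A)$, and $\ell$ ranges over all of $1,\dots,k+1$ as $J$ varies; no single choice of $j$ forces a uniform sign. Your proposed repair---a ``global column sign flip'' in the residual case---depends on $-1$ being a square, which fails precisely when $q\equiv 3\pmod 4$ (so already for $\F_3$, the paper's first running example). Thus even the weaker unimodality inequality $|\Gr_{k,n}^{\geq 0}(\F_q)|\le |\Gr_{k+1,n}^{\geq 0}(\F_q)|$ for $k<n/2$ is left unproved.

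For log-concavity you correctly isolate the real obstruction (nonnegativity in $\F_q$ is not closed under addition, so Butler-type row operations and any cell-decomposition argument imported from $\R$ have no reason to preserve the sign condition), and then downgrade the goal to checking small cases via Theorems~\ref{thm:proj}, \ref{thm:2nF3}, \ref{thm:2nF5} together with Theorem~\ref{thm:alt}. That is supporting evidence, which is exactly what the tables in the paper already provide, not a proof. The conjecture remains open.
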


\section{Subfields}\label{sec:subfield}
It is known that $\F_{p^s}$ contains $\F_{p^r}$ as a subfield if and only if $r$ divides $s$.
Moreover, when $r$ divides $s$ the finite field $\F_{p^s}$ contains exactly one copy of $\F_{p^r}$.
So, we may view $\F_{p^r}$ as a subset of $\F_{p^s}$.
Also, each $a \in \F_q$ satisfies $a^q = a$, and thus $a^{q-1} = 1$ whenever $a \neq 0$.
Here $\F_{p^r}$ is the subset of $\F_{p^s}$ consisting of those elements $a$ satisfying $a^{p^r} = a$.
The \emph{group of units} $\F_q^{\times} = \{a \in \F_q : a \neq 0\}$ is a cyclic group of order $q-1$.
This means there exists $\alpha \in \F^{\times}_q$ called a \emph{primitive root} so that $\F_q^{\times} = \{\alpha^m : 0 \leq m < q-1\}$.

\begin{lemma}
    If $\alpha$ is a primitive root of $\F_q$ a finite field of odd characteristic, then
    \[\F_q^{> 0} = \{\alpha^{2m} : 0 \leq m < \frac{q-1}{2}\}\]
    and thus
    \[\F_q^{> 0} = \{a \in \F_q : a^\frac{q-1}{2} = 1\}.\]
    \label{lem:>0}
\end{lemma}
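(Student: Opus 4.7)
The plan is to exploit the cyclic structure of $\F_q^\times = \langle \alpha \rangle$ together with the count $|\F_q^{>0}| = (q-1)/2$ that was already established in the introduction. Essentially this is a form of Euler's criterion, and both equalities can be forced by combining a one-sided inclusion with a cardinality argument.

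For the first equality, I would first note that each $\alpha^{2m}$ with $0 \leq m < (q-1)/2$ is a nonzero square, namely $(\alpha^m)^2$, and hence lies in $\F_q^{>0}$. These $(q-1)/2$ elements are distinct because $\alpha$ has order $q-1$ and the exponents $0,2,4,\dots,q-3$ are pairwise incongruent modulo $q-1$ (this uses that $q-1$ is even, which is where odd characteristic enters). Since $|\F_q^{>0}| = (q-1)/2$ by the count recalled in the introduction, the inclusion $\{\alpha^{2m} : 0 \leq m < (q-1)/2\} \subseteq \F_q^{>0}$ must be an equality.

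For the second equality, I would observe that every $\alpha^{2m}$ satisfies $(\alpha^{2m})^{(q-1)/2} = \alpha^{m(q-1)} = 1$, giving the inclusion $\F_q^{>0} \subseteq \{a \in \F_q : a^{(q-1)/2} = 1\}$. The reverse inclusion follows from a degree argument: the polynomial $x^{(q-1)/2} - 1 \in \F_q[x]$ has at most $(q-1)/2$ roots in the field $\F_q$, and the first equality has just exhibited $(q-1)/2$ of them, so the positive elements exhaust the root set.

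There is no real obstacle here; the argument is essentially bookkeeping around the cyclic group structure. The one point requiring care is to invoke odd characteristic at the right moment so that $(q-1)/2$ is an integer and the exponents $2m$ remain distinct modulo $q-1$, and to make sure the cardinality count from the introduction is used to upgrade inclusions into equalities rather than attempting to construct an explicit inverse to the squaring map.
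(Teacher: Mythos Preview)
Your argument is correct and follows essentially the same approach as the paper: establish the first equality by observing that the even powers of $\alpha$ give $(q-1)/2$ distinct nonzero squares and then invoke the cardinality count from the introduction, and deduce the second equality from the computation $(\alpha^{2m})^{(q-1)/2}=1$. The only difference is that you make the reverse inclusion of the second equality explicit via the degree bound on $x^{(q-1)/2}-1$, whereas the paper leaves this implicit; your version is slightly more detailed but not a different method.
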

\begin{proof}
    It is clear that $\{\alpha^{2m} : 0 \leq m < \frac{q-1}{2}\}$ consists of $\frac{q-1}{2}$ distinct nonzero elements which are squares in $\F_q$.
    So, this set must be the set of positive elements since we know $|\F_q^{> 0}| = \frac{q-1}{2}$.
    From this the second equality in the lemma follows since $(\alpha^{2m})^{\frac{q-1}{2}} = (\alpha^{q-1})^m = 1$.
\end{proof}

\begin{lemma}
    If $s = 2mr$ for some integer $m \geq 1$ and $p$ is an odd prime, then $\F_{p^r} \subseteq \F_{p^s}^{\geq 0}$.
    \label{lem:subfield}
\end{lemma}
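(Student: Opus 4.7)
The plan is to reduce the membership question to a divisibility check on exponents via Lemma~\ref{lem:>0}. The zero element is always a square, so I only need to show that every $a \in \F_{p^r}^\times$ lies in $\F_{p^s}^{> 0}$, i.e., satisfies $a^{(p^s-1)/2} = 1$ in $\F_{p^s}$.

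Since $a \in \F_{p^r}^\times$, I already know $a^{p^r - 1} = 1$. So it would suffice to show that $p^r - 1$ divides $(p^s - 1)/2$, equivalently that $2(p^r - 1)$ divides $p^s - 1$. Writing $s = 2mr$, I would use the standard geometric-series factorization
\[
\frac{p^s - 1}{p^r - 1} = 1 + p^r + p^{2r} + \cdots + p^{(2m-1)r},
\]
which has exactly $2m$ summands. The key observation is that since $p$ is odd, every summand $p^{jr}$ is odd, and a sum of $2m$ odd integers is even. Hence $2$ divides $(p^s - 1)/(p^r - 1)$, which gives $2(p^r - 1) \mid p^s - 1$, as desired. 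Then $a^{(p^s - 1)/2} = \bigl(a^{p^r - 1}\bigr)^{(p^s - 1)/(2(p^r - 1))} = 1$, so $a \in \F_{p^s}^{> 0}$ by Lemma~\ref{lem:>0}.

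There isn't really a main obstacle here; the whole argument rests on the elementary parity observation that the cofactor $(p^s - 1)/(p^r - 1)$ is a sum of an even number of odd terms precisely when $s/r$ is even. The hypothesis $s = 2mr$ is used exactly at this step, and if $s/r$ were odd the divisibility $2(p^r-1) \mid p^s-1$ would fail in general (for example with $p = 3$, $r = 1$, $s = 1$). So the proof is essentially a one-line exponent computation once Lemma~\ref{lem:>0} is in hand.
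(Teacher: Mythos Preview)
Your proof is correct and follows essentially the same approach as the paper: reduce to Lemma~\ref{lem:>0}, use $a^{p^r-1}=1$, and observe via the geometric-series factorization that $(p^s-1)/(p^r-1)$ is a sum of $2m$ odd terms and hence even. The only cosmetic difference is that you phrase the core step as the divisibility $2(p^r-1)\mid p^s-1$, whereas the paper writes the exponent manipulation directly.
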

\begin{proof}
    Consider $a \in \F_{p^r} \subseteq \F_{p^s}$.
    If $a = 0$, then $a \in \F_{p^s}^{\geq 0}$.
    If $a \neq 0$, then $a^{p^r - 1}=1$.
    This means
    \[a^{\frac{p^s - 1}{2}} = a^{\frac{p^{2mr} - 1}{2}} = a^{(p^r-1)\left(\frac{p^{(2m-1)r} + p^{(2m-2)r} + \dots + 1}{2}\right)}=1\]
    and the lemma holds after applying Lemma~\ref{lem:>0}.
    Notice that 
    \[p^{(2m-1)r} + p^{(2m-2)r} + \dots + 1\]
    is even since each term in the sum in odd and there is an even number of terms.
\end{proof}

\begin{theorem}
    If $s = 2mr$ for some integer $m \geq 1$ and $p$ is an odd prime, then there is a natural injective map $\Gr_{k,n}(\F_{p^r}) \hookrightarrow \Gr_{k,n}^{\geq 0}(\F_{p^s})$ and thus $|\Gr_{k,n}^{\geq 0}(\F_{p^s})| \geq |\Gr_{k,n}(\F_{p^r})|$.
    \label{thm:sub}
\end{theorem}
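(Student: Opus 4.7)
The plan is to define the map by base change of matrix representatives and verify three things: well-definedness, that the image lies in the nonnegative part, and injectivity.

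First I would set up the map. Given $V \in \Gr_{k,n}(\F_{p^r})$, choose a full rank $k \times n$ matrix $A$ with entries in $\F_{p^r}$ such that $V = [A]$. Since $\F_{p^r} \subseteq \F_{p^s}$, the matrix $A$ has entries in $\F_{p^s}$ and has the same rank there, so $[A]$ is a well-defined element of $\Gr_{k,n}(\F_{p^s})$. To see this does not depend on the choice of $A$, note that any two representatives $A, B$ over $\F_{p^r}$ are related by $B = MA$ for some $M \in \GL_k(\F_{p^r}) \subseteq \GL_k(\F_{p^s})$, so they continue to represent the same subspace over $\F_{p^s}$.

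Next I would check the image lies in $\Gr^{\geq 0}_{k,n}(\F_{p^s})$. For any $I \in \binom{[n]}{k}$, the Pl\"ucker coordinate $\Delta_I(A)$ is a determinant of a $k \times k$ submatrix with entries in $\F_{p^r}$, hence $\Delta_I(A) \in \F_{p^r}$. By Lemma~\ref{lem:subfield}, $\F_{p^r} \subseteq \F_{p^s}^{\geq 0}$, so every Pl\"ucker coordinate is nonnegative when viewed in $\F_{p^s}$, placing $[A]$ in the totally nonnegative finite Grassmannian over $\F_{p^s}$.

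Finally I would verify injectivity. The cleanest argument uses uniqueness of reduced row echelon form: if $V, W \in \Gr_{k,n}(\F_{p^r})$ have the same image, choose the unique RREF representatives $A$ of $V$ and $B$ of $W$, both with entries in $\F_{p^r}$. Viewed over $\F_{p^s}$, these matrices are still in RREF and represent the same subspace of $\F_{p^s}^n$, but the RREF of a subspace is unique, so $A = B$ and hence $V = W$. This completes the construction of the injection and yields the desired inequality. The argument is essentially routine once Lemma~\ref{lem:subfield} is in hand; the only mild subtlety is remembering to invoke uniqueness of RREF (over $\F_{p^s}$, applied to matrices with entries already in $\F_{p^r}$) to rule out collisions.
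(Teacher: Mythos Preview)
Your proof is correct and follows the same approach as the paper: view a matrix representative over the larger field and invoke Lemma~\ref{lem:subfield} to see that all Pl\"ucker coordinates lie in $\F_{p^s}^{\geq 0}$. The paper's proof is more terse, leaving well-definedness and injectivity implicit, whereas you spell these out via change-of-basis and uniqueness of RREF; but there is no substantive difference.
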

\begin{proof}
    If $[A] \in \Gr_{k,n}(\F_{p^r})$ we can consider $[A] \in \Gr_{k,n}(\F_{p^s})$.
    Since all Pl\"ucker coordinates of $[A]$ are in $\F_{p^r}$ and $\F_{p^r} \subseteq \F_{p^s}^{\geq 0}$ by Lemma~\ref{lem:subfield} it follows $[A] \in \Gr_{k,n}^{\geq 0}(\F_{p^s})$.
\end{proof}

Generally, the bound in Theorem~\ref{thm:sub} is far from tight.
As an example, the sequence $\left(|\Gr_{k,5}^{\geq 0}(\F_9)|\right)_{0 \leq k \leq 5}$ is
\[1, 781, 13010, 13010, 781, 1\]
while the sequence $\left(|\Gr_{k,5}(\F_3)|\right)_{0 \leq k \leq 5}$ is
\[1, 121, 1210, 1210, 121, 1.\]

\section{Enumeration of positive and nonnegative lines}
\label{sec:proj}
In this section take $k=1$ and work inside $\Gr_{1,n}(\F_q) = \F_q \PP^{n-1}$ which is a projective space of dimension $n-1$.
We consider both the totally nonnegative finite Grassmannian and the totally positive finite Grassmannian as both are nonempty for all values $n \geq 1$.
\begin{theorem}
    For any $n \geq 1$ and $\F_q$ of odd characteristic
    \[|\Gr_{1,n}^{\geq 0}(\F_q)| = \sum_{i=0}^{n-1} \left(\frac{q+1}{2}\right)^i  = \sum_{i=0}^{n-1} \sum_{j = i}^{n-1} \frac{\binom{j}{i}}{2^j} q^i = \frac{\left(\frac{q+1}{2}\right)^n - 1}{\left(\frac{q+1}{2}\right) - 1}\]
    gives the point count of the totally nonnegative finite Grassmannian for $k=1$.
For any $n \geq 1$ and $\F_q$ of odd characteristic
\[|\Gr_{1,n}^{>0}(\F_q)| = \left(\frac{q-1}{2}\right)^{n-1}  = 2^{-(n-1)} \sum_{i=0}^{n-1} (-1)^{n-1-i} \binom{n-1}{i} q^i\]
    gives the point count of the totally positive finite Grassmannian $k=1$.
    \label{thm:proj}
\end{theorem}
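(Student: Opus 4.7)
The plan is to parameterize each line in $\Gr_{1,n}(\F_q)$ by its unique reduced row echelon form representative and then count directly using the fact that for $k=1$ the Pl\"ucker coordinates are simply the entries of the representative vector.

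More precisely, I would first note that every element of $\Gr_{1,n}(\F_q)$ has a unique RREF representative of the form $v = (0,\dots,0,1,a_{i+1},\dots,a_n)$, where the leading $1$ appears in some position $i \in \{1,\dots,n\}$ and the entries $a_{i+1},\dots,a_n \in \F_q$ are arbitrary. The Pl\"ucker coordinate $\Delta_{\{j\}}(v)$ is the $j$-th entry of $v$, so the nonnegativity of all Pl\"ucker coordinates when computed from the RREF reduces to the condition $a_j \geq 0$ for $j = i+1, \dots, n$; the earlier entries are $0$ and the leading entry is $1$, both of which are nonnegative. Since $|\F_q^{\geq 0}| = \frac{q+1}{2}$, this gives $\left(\frac{q+1}{2}\right)^{n-i}$ valid RREF representatives for each fixed $i$. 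Summing over $i = 1,\dots,n$ yields
\[
|\Gr_{1,n}^{\geq 0}(\F_q)| = \sum_{i=1}^n \left(\frac{q+1}{2}\right)^{n-i} = \sum_{i=0}^{n-1}\left(\frac{q+1}{2}\right)^i.
\]
The second equality in the theorem's first formula then follows by expanding $\left(\frac{q+1}{2}\right)^j = 2^{-j}\sum_{i=0}^{j}\binom{j}{i}q^i$ via the binomial theorem and swapping the order of summation to collect powers of $q$.

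For the totally positive case, I would argue that a line $V$ lies in $\Gr_{1,n}^{>0}(\F_q)$ iff it has a representative $v$ with every entry in $\F_q^{>0}$. Since any two representatives differ by a scalar $\lambda \in \F_q^\times$ and multiplication by $\lambda$ sends $\F_q^{>0}$ to either $\F_q^{>0}$ or $\F_q^{<0}$ (using that positivity is preserved under multiplication by a positive element and reversed by a negative element, as noted in the introduction), the existence of such a representative is equivalent to all entries of the RREF being strictly positive. Because $0 \not> 0$, this forces the leading $1$ to occur in position $i=1$ and each of the remaining $n-1$ entries to lie in $\F_q^{>0}$. Using $|\F_q^{>0}| = \frac{q-1}{2}$, this gives exactly $\left(\frac{q-1}{2}\right)^{n-1}$ such lines, and the binomial expansion of $(q-1)^{n-1}$ yields the polynomial form.

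The main obstacle, if there is one, is simply the careful bookkeeping in the binomial expansion and index swap for the first formula; the counting itself is routine once the RREF parameterization is in hand, and the characterization in the positive case rests on the elementary observation that a zero coordinate cannot be rescaled to be positive, so all lines in $\Gr_{1,n}^{>0}(\F_q)$ are parameterized by strictly positive $(n-1)$-tuples.
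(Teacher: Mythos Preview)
Your proposal is correct and follows essentially the same approach as the paper: parameterize by the unique RREF representative, observe that for $k=1$ the Pl\"ucker coordinates are the entries, and count the free entries using $|\F_q^{\geq 0}| = \frac{q+1}{2}$ (respectively $|\F_q^{>0}| = \frac{q-1}{2}$). Your justification in the positive case that the pivot must sit in position $1$ is slightly more explicit than the paper's, but the argument is the same.
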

\begin{proof}
    Take $V \in \Gr_{1,n}^{\geq 0}(\F_q)$ and $A$ in RREF with $[A] = V$.
    This means $A$ is a row vector where the first nonzero entry of $A$ is a $1$, and $A$ necessarily has some nonzero entry.
    Then every other entry in $A$ must be some nonnegative element of $\F_q$.
    When the first nonzero entry for $A$ is in position $1 \leq i \leq n$ we then have $n-i$ positions to the right for which we have $\frac{q+1}{2}$ choices for each.
    Thus,
    \[
        |\Gr_{1,n}^{\geq 0}(\F_q)| = \sum_{i=1}^n \left(\frac{q+1}{2}\right)^{n-i} =\sum_{i=0}^{n-1} \left(\frac{q+1}{2}\right)^i = \sum_{i=0}^{n-1} \sum_{j = i}^{n-1} \frac{\binom{j}{i}}{2^j} q^i
    \]
    is the point count in the totally nonnegative finite Grassmannian. The last equality follows because the first summation formula is a finite geometric series. 

        Now take $V \in \Gr_{1,n}^{> 0}(\F_q)$ and $A$ in reduced row echelon form with $[A] = V$.
        Then the first entry of $A$ must be $1$ and all other entries are chosen to be one of the $\frac{q-1}{2}$ positive elements of $\F_q$.
        This results in count of $\left(\frac{q-1}{2}\right)^{n-1}$ which can be expanded with the binomial theorem.
\end{proof}

We find in Theorem~\ref{thm:proj} that $|\Gr_{1,n}^{\geq 0}(\F_q)| \in \Z \left[\frac{q+1}{2}\right]$ and $|\Gr_{1,n}^{> 0}(\F_q)| \in \Z\left[\frac{q-1}{2} \right]$ are polynomials with nonnegative integer coefficients in the number of nonnegative and positive elements of $\F_q$ respectively.
In fact, $|\Gr_{1,n}^{\geq 0}(\F_q)|$ turns out to be the usual $q$-integer $[n]_q = 1 + q + q^2 + \cdots q^{n-1}$ evaluated at $q \to \frac{q+1}{2} = |\F^{\geq 0}|$.
Similarly, in the very special setting of $k=1$ we have that $|\Gr_{1,n}^{> 0}(\F_q)|$ is simply a power of $\frac{q-1}{2} = |\F^{> 0}_q|$.
Thus, $|\Gr_{1,n}^{\geq 0}(\F_q)|, |\Gr_{1,n}^{> 0}(\F_q)| \in \Q[q]$ are polynomials in $q$ with rational coefficients.
These polynomials take integer values at odd integer inputs.
Examples of these polynomials for the totally nonegative part are
\begin{align*}
    |\Gr_{1,2}^{\geq 0}(\F_q)| &=  \frac{1}{2} q + \frac{3}{2} \\
    |\Gr_{1,3}^{\geq 0}(\F_q)| &= \frac{1}{4} q^{2} + q + \frac{7}{4}  \\
    |\Gr_{1,4}^{\geq 0}(\F_q)| &=  \frac{1}{8} q^{3} + \frac{5}{8} q^{2} + \frac{11}{8} q + \frac{15}{8} \\
    |\Gr_{1,5}^{\geq 0}(\F_q)| &=  \frac{1}{16} q^{4} + \frac{3}{8} q^{3} + q^{2} + \frac{13}{8} q + \frac{31}{16} \\
\end{align*}
while examples of the totally positive part are
\begin{align*}
    |\Gr_{1,2}^{> 0}(\F_q)| &=  \frac{1}{2} q - \frac{1}{2} \\
    |\Gr_{1,3}^{> 0}(\F_q)| &= \frac{1}{4} q^{2} - \frac{1}{2} q + \frac{1}{4}  \\
    |\Gr_{1,4}^{> 0}(\F_q)| &=  \frac{1}{8} q^{3} - \frac{3}{8} q^{2} + \frac{3}{8} q - \frac{1}{8} \\
    |\Gr_{1,5}^{> 0}(\F_q)| &=  \frac{1}{16} q^{4} - \frac{1}{4} q^{3} + \frac{3}{8} q^{2} - \frac{1}{4} q + \frac{1}{16}.\\
\end{align*}

\section{Enumeration of nonnegative $2$-planes}
\label{sec:planes}
In this section, we look at the number of points in $\Gr_{2,n}^{\geq 0}(\F_q)$ for certain values of $q$.
We are able to obtain the point count and find generating functions in the cases of $q=3$ and $q=5$.
For $q=3$ we also find a connection to coefficients for Chebyshev polynomials.

\subsection{The field $\F_3$}
In this subsection, we deal with the field $\F_3$.
It is then the case that $1>0$ is the only positive element and $-1 < 0$ is the only negative element.
We will find the cardinality of $\Gr_{2,n}^{\geq 0}(\F_3)$ for each $n \geq 2$.
The following lemma will be used in our point counting of $\Gr_{2,n}^{\geq 0}(\F_3)$.

\begin{lemma}
    Let $a_n = \frac{2^{n-2}(n+1)(n+2)(n+6)}{3}$ for $n \geq 0$, then the following recurrence is satisfied
    \[a_n - 8a_{n-1} + 24a_{n-2} - 32a_{n-3} + 16a_{n-4} = 0\]
    for $n \geq 4$ with initial conditions $a_0 = 1$, $a_1 = 7$, $a_2 = 32$, and $a_3 = 120$.
    \label{lem:rec}
\end{lemma}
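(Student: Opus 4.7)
The plan is to prove the recurrence by identifying its characteristic polynomial and checking that the closed form lies in the corresponding solution space, then verifying the initial conditions by a direct substitution.

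First I would write down the characteristic polynomial of the linear recurrence in question: substituting $a_n = x^n$ and dividing by $x^{n-4}$ reduces to $x^4 - 8x^3 + 24 x^2 - 32 x + 16$. I would then observe that this is precisely $(x-2)^4$, since the coefficients are $\binom{4}{j}(-2)^j$. Because $2$ is a root of multiplicity $4$, the general solution of the recurrence is $a_n = 2^n P(n)$ with $P$ a polynomial of degree at most $3$ in $n$.

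Next I would rewrite the proposed closed form as
\[a_n = \frac{2^{n-2}(n+1)(n+2)(n+6)}{3} = 2^n \cdot \frac{(n+1)(n+2)(n+6)}{12},\]
so that it is manifestly of the form $2^n P(n)$ for the cubic $P(n) = (n+1)(n+2)(n+6)/12$. This places $a_n$ in the solution space of the recurrence, so $a_n - 8a_{n-1} + 24 a_{n-2} - 32 a_{n-3} + 16 a_{n-4} = 0$ automatically for every $n \geq 4$.

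Finally I would confirm the initial conditions by direct evaluation of the closed form at $n = 0, 1, 2, 3$: one gets $\tfrac{1}{4}\cdot 1 \cdot 2 \cdot 6 / 3 = 1$, $\tfrac{1}{2}\cdot 2 \cdot 3 \cdot 7 / 3 = 7$, $1 \cdot 3 \cdot 4 \cdot 8 / 3 = 32$, and $2 \cdot 4 \cdot 5 \cdot 9 / 3 = 120$, matching the stated values. There is no real obstacle here; the only thing to be careful about is the clean factorization $(x-2)^4$, which makes the argument go through without any direct verification of the four-term identity. If one preferred to avoid the characteristic-polynomial viewpoint, the alternative would be a brute polynomial identity check, treating $a_n - 8a_{n-1} + 24a_{n-2} - 32a_{n-3} + 16 a_{n-4}$ as a rational function of $n$ times $2^{n-4}$ and confirming that the polynomial factor is identically zero; this is routine but less illuminating.
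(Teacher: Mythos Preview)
Your argument is correct. The factorization $x^4 - 8x^3 + 24x^2 - 32x + 16 = (x-2)^4$ is right, the general solution of the recurrence is indeed $2^n$ times a cubic in $n$, the given closed form is visibly of that shape, and your initial-value checks are accurate.

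The paper's own proof is simply the one-line ``can be verified by direct computation,'' i.e.\ the brute polynomial identity you mention as the alternative. Your route via the characteristic polynomial is a genuinely different (and more informative) argument: it explains \emph{why} a closed form of the shape $2^n \cdot (\text{cubic in } n)$ must satisfy this particular recurrence, rather than just confirming that it does. The trade-off is that your approach implicitly invokes the standard theory of linear recurrences with repeated roots, whereas the paper's direct computation requires no such background and is entirely self-contained. Either is perfectly adequate for a lemma of this kind.
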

\begin{proof}
The lemma can be verified by direct computation.
\end{proof}

One interpretation we will give involves the \emph{Chebyshev polynomials of the first kind} which are defined by $T_n(\cos(\theta)) = \cos(n \theta)$ for $n \geq 0$.
These polynomials can also be defined by the recurrence $T_{n+1}(x) = 2xT_n(x) - T_{n-1}(x)$ with $T_0(x) = 1$ and $T_1(x) = x$.
We will make use of a combinatorial formula for the coefficients of $T_n(x)$.
To do this we need the notion of a \emph{tiling} which is a way to cover a $1 \times n$ strip with $1 \times 1$ \emph{squares} and $1 \times 2$ \emph{dominoes}.
It is well known that the \emph{Fibonacci number} $f_{n+1}$ counts the number of such tiling of an $1 \times n$ strip where $f_{n+1} = f_n + f_{n-1}$ for $n \geq 2$ and $f_1 = f_2 = 1$.
The \emph{weight} of any such tiling is the product of $-1$ for each domino, $x$ for a square in the leftmost position, and $2x$ for a square in any other position.
Benjamin and Walton~\cite[Theorem 3]{Cheb} have shown that the Chebyshev polynomial $T_n(x)$ is equal to the sum of the weights of all tilings of a $1 \times n$ strip.
Figure~\ref{fig:tile} shows all tilings of a $1 \times 4$ rectangle from which $T_4(x)$ can be calculated.
One can obtain a particular coefficient of $T_n(x)$ if we control the number of dominoes in our tilings.

\begin{figure}
\begin{tikzpicture}[scale=0.6]
\draw (0,0) rectangle (2,1) node[pos=.5] {$-1$};
\draw (2,0) rectangle (4,1)  node[pos=.5] {$-1$};

\draw (0,-1.5) rectangle (2,-0.5) node[pos=.5] {$-1$};
\draw (2,-1.5) rectangle (3,-0.5)  node[pos=.5] {$2x$};
\draw (3,-1.5) rectangle (4,-0.5)  node[pos=.5] {$2x$};

\draw (5,-1.5) rectangle (6,-0.5) node[pos=.5] {$x$};
\draw (6,-1.5) rectangle (8,-0.5)  node[pos=.5] {$-1$};
\draw (8,-1.5) rectangle (9,-0.5)  node[pos=.5] {$2x$};

\draw (10,-1.5) rectangle (11,-0.5) node[pos=.5] {$x$};
\draw (11,-1.5) rectangle (12,-0.5)  node[pos=.5] {$2x$};
\draw (12,-1.5) rectangle (14,-0.5)  node[pos=.5] {$-1$};

\draw (0,-3) rectangle (1,-2) node[pos=.5] {$x$};
\draw (1,-3) rectangle (2,-2)  node[pos=.5] {$2x$};
\draw (2,-3) rectangle (3,-2)  node[pos=.5] {$2x$};
\draw (3,-3) rectangle (4,-2)  node[pos=.5] {$2x$};
\end{tikzpicture}
\begin{caption}
{The $f_5 = 5$ tilings in the figure demonstrate that $T_4(x) = 1 -8x^2 + 8x^4$.}
\end{caption}
\label{fig:tile}
\end{figure}

For the statement of the next lemma, we will need notation for extracting a coefficient from any formal power series (which includes the case of polynomials).
Given a formal power series
\[f(x) = \sum_{n = 0}^{\infty} c_n x^n\]
we write $[x^n](f(x)) = c_n$ to denote the coefficient of $x^n$.

\begin{lemma}
    For any $n \geq 0$, $[x^n]\left(T_{n+6}(x)\right) = \frac{-2^{n-2}(n+1)(n+2)(n+6)}{3}$.
    \label{lem:tile}
\end{lemma}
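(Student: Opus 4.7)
The plan is to use the Benjamin--Walton tiling interpretation just introduced: $T_{n+6}(x)$ equals the sum of weights over all tilings of a $1 \times (n+6)$ strip, and we want the coefficient of $x^n$. Since each square contributes exactly one factor of $x$ (and dominoes contribute no $x$), the tilings that contribute to $[x^n]T_{n+6}(x)$ are precisely those with $n$ squares and $3$ dominoes (total length $n + 6$, as required).

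Each of the $3$ dominoes contributes a factor of $-1$, so every such tiling carries an overall sign $(-1)^3 = -1$, explaining the minus sign in the target formula. The total power of $2$ depends on whether the leftmost tile is a square (which contributes $x$ rather than $2x$) or a domino, so I split into two cases.

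In Case 1, the leftmost tile is a square. I then interleave the remaining $n-1$ squares with $3$ dominoes on the rest of the strip, giving $\binom{n+2}{3}$ tilings, each of weight $x \cdot (2x)^{n-1} \cdot (-1)^3 = -2^{n-1} x^n$. In Case 2, the leftmost tile is a domino. I interleave the $n$ squares (each now weighted $2x$) with the remaining $2$ dominoes, yielding $\binom{n+2}{2}$ tilings, each of weight $(-1) \cdot (2x)^n \cdot (-1)^2 = -2^n x^n$.

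Summing the two contributions gives
\[
[x^n]T_{n+6}(x) \;=\; -2^{n-1}\binom{n+2}{3} \;-\; 2^{n}\binom{n+2}{2} \;=\; -2^{n-1}\left[\binom{n+2}{3} + 2\binom{n+2}{2}\right],
\]
and a short algebraic simplification reduces the bracket to $(n+1)(n+2)(n+6)/6$, producing the claimed formula $-2^{n-2}(n+1)(n+2)(n+6)/3$. There is no real obstacle: the only things to be careful about are (i) using the correct weight $x$ (not $2x$) for the leftmost square in Case 1, and (ii) the standard count $\binom{s+d}{d}$ for the number of ways to interleave $s$ squares with $d$ dominoes on a strip. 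An alternative, if one prefers, is to verify via the explicit coefficient formula $T_m(x) = \tfrac{m}{2}\sum_k \tfrac{(-1)^k}{m-k}\binom{m-k}{k}(2x)^{m-2k}$ specialized to $m = n+6$, $k = 3$, but the tiling route fits the narrative of the paper more naturally.
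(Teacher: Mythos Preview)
Your proof is correct and matches the paper's argument essentially line for line: the same Benjamin--Walton tiling interpretation, the same case split on whether the leftmost tile is a square or a domino, the same counts $\binom{n+2}{3}$ and $\binom{n+2}{2}$ with weights $-2^{n-1}x^n$ and $-2^n x^n$, and the same final simplification. The only difference is the order in which you present the two cases.
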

\begin{proof}
    To find the coefficient of $x^n$ in $T_{n+6}(x)$ we look at tilings of a $1 \times (n+6)$ strip with exactly $3$ dominoes of weight $-1$.
    This means there will be $n$ squares each of weight $x$ or $2x$ thus resulting in a contribution to the $x^n$ term in the polynomial.
    First, assume the leftmost position is covered by a domino, this leaves $n+4$ squares to cover using exactly $2$ dominoes.
    There are $\binom{n+2}{2}$ tilings of this form as any such tiling must use $n$ total squares and dominoes with $2$ of them being dominoes.
    All tilings of this type contribute $-2^nx^n$ since the weight of each square in the tiling is $2x$.
    Next, assume the leftmost position is covered by a single square contributing a weight of $x$.
    This means we have $n+5$ squares remaining to cover in a way that uses exactly $3$ dominoes.
    There are $\binom{n+2}{3}$ tilings like this as we will use $n+2$ more tiles where $3$ of them must be dominoes.
    Each tiling of this type will contribute $-2^{n-1}x^n$ since we have $n$ total squares one of which has weight $x$ while the others have weight $2x$.
    As that remains is to compute,
    \[-2^n \binom{n+2}{2} x^n - 2^{n-1} \binom{n+2}{3} x^n = \frac{-2^{n-2}(n+1)(n+2)(n+6)x^n}{3}\]
    and the lemma is proven.
\end{proof}

This lemma will be used in the proof of the following theorem.
The first formula found in Equation~(\ref{eq:F3_1}) is the one that will prove by explicitly counting matrices giving elements of $\Gr_{2,n}^{\geq 0}(\F_3)$.
This formula will then be used to derive further expressions for $|\Gr_{2,n}^{\geq 0}(\F_3)|$.

\begin{theorem}
For any $n \geq 2$
\begin{align}
    |\Gr_{2,n}^{\geq 0}(\F_3)| &= \sum_{1 \leq i < j < \ell < m \leq n} 2^{n-i-3} + \sum_{1 \leq i < j < \ell \leq n} 2\cdot 2^{n-i-2} + \sum_{1 \leq i < j \leq n} 2^{n-i-1}\label{eq:F3_1}\\
    |\Gr_{2,n}^{\geq 0}(\F_3)| &= \sum_{i \geq 1} 2^{n-i-3} \binom{n-i}{3} + \sum_{i \geq 1} 2^{n-i-1} \binom{n-i}{2} + \sum_{i \geq 1} 2^{n-i-1} \binom{n-i}{1}\label{eq:F3_2}\\
    |\Gr_{2,n}^{\geq 0}(\F_3)| &= \frac{2^{n-4}(n-1)n(n+4)}{3}\label{eq:F3_3}\\
    |\Gr_{2,n}^{\geq 0}(\F_3)| &= [x^{n-2}]\left(\frac{1-x}{(1-2x)^4}\right)\label{eq:F3_4}\\
    |\Gr_{2,n}^{\geq 0}(\F_3)| &= [x^{n-2}]\left(-T_{n+4}(x)\right)\label{eq:F3_5}\\
    |\Gr_{2,n}^{\geq 0}(\F_3)| &= [x^{n-2}t^{n+4}]\left(\frac{-(1-tx)}{1-2tx+t^2}\right)\label{eq:F3_6}
\end{align}
all give the point count of the totally nonnegative finite Grassmannian with $k=2$ over $\F_3$.
\label{thm:2nF3}
\end{theorem}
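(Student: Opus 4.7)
The plan is to prove (\ref{eq:F3_1}) by a direct count over matrices in reduced row echelon form, from which (\ref{eq:F3_2}) follows by regrouping and the remaining equations follow by standard algebraic manipulations. Represent $V \in \Gr_{2,n}^{\geq 0}(\F_3)$ by an RREF matrix $A$ with pivots in columns $i < j$, and write the free entries as $a_p = A_{1,p}$ for $i < p < j$ and pairs $(a_q, b_q) = (A_{1,q}, A_{2,q})$ for $q > j$. A short computation of $2 \times 2$ minors gives $\Delta_{\{p,j\}}(A) = a_p$, $\Delta_{\{i,q\}}(A) = b_q$, $\Delta_{\{j,q\}}(A) = -a_q$, and $\Delta_{\{p,q\}}(A) = a_p b_q - a_q b_p$ for $j < p < q$, while every other Pl\"ucker coordinate is $0$ or equals $\Delta_{\{i,j\}}(A) = 1$. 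Nonnegativity over $\F_3$ therefore forces $a_p \in \{0,1\}$ between the pivots and $(a_q, b_q) \in \{0,-1\} \times \{0,1\}$ to the right of $j$; the only remaining nontrivial condition is $a_p b_q - a_q b_p \geq 0$ for all $j < p < q$.

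Classify each column $q > j$ by its pair type: $A = (0,0)$, $B = (0,1)$, $C = (-1,0)$, $D = (-1,1)$. A check of the sixteen ordered pairs reveals that the only forbidden transitions $p \to q$ (for $p < q$) are $C \to B$, $C \to D$, and $D \to B$. Hence in any valid tail the non-$A$ symbols must appear in the fixed order $B \cdots B\, D \cdots D\, C \cdots C$. Enumeration of valid tails falls into four cases: (i) tails containing only $A$ and $B$, contributing $2^{n-j}$ tails; (ii) tails with at least one $D$ but no $C$, where marking the first $D$ by $\ell$, the positions $j < p < \ell$ take values in $\{A, B\}$, position $\ell$ is fixed to $D$, and positions $\ell < p \leq n$ take values in $\{A, D\}$, giving $2^{n-j-1}$ tails per $\ell$; (iii) symmetrically with $\ell$ marking the first $C$ when there are no $D$'s; (iv) tails containing both $D$'s and $C$'s, with $\ell < m$ marking the first $D$ and the first $C$, giving $2^{n-j-2}$ tails per pair $(\ell, m)$.

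Multiplying by the $2^{j-i-1}$ choices for entries between the pivots and summing over the appropriate indices yields exactly the three summands of (\ref{eq:F3_1}): case (i) gives $\sum_{1 \leq i < j \leq n} 2^{n-i-1}$, cases (ii) and (iii) combine to give $2 \sum_{1 \leq i < j < \ell \leq n} 2^{n-i-2}$, and case (iv) gives $\sum_{1 \leq i < j < \ell < m \leq n} 2^{n-i-3}$. Equation (\ref{eq:F3_2}) is then immediate by fixing $i$ and replacing the inner sums with the binomial counts $\binom{n-i}{k}$. For (\ref{eq:F3_3}), note that under the shift $n \mapsto n-2$ the claimed closed form is exactly $a_{n-2}$ from Lemma~\ref{lem:rec}, so it suffices to check that (\ref{eq:F3_2}) agrees with the initial values $a_0 = 1, a_1 = 7, a_2 = 32, a_3 = 120$ and satisfies the stated recurrence; since the characteristic polynomial of that recurrence is $(r-2)^4$, the generating function of $(a_n)$ is rational with denominator $(1-2x)^4$, and matching numerators against the initial values yields (\ref{eq:F3_4}). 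Finally, (\ref{eq:F3_5}) is obtained by applying Lemma~\ref{lem:tile} with $n \mapsto n-2$, and (\ref{eq:F3_6}) follows from the classical generating function $\sum_{n \geq 0} T_n(x) t^n = \frac{1 - tx}{1 - 2tx + t^2}$.

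The main obstacle is the case analysis of ordered tail transitions together with the subsequent bookkeeping: one must verify that the marker positions $\ell$ and $m$ partition the valid tails both exhaustively and uniquely so that they align precisely with the index ranges in (\ref{eq:F3_1}). Once this combinatorial accounting is in place, the derivations of (\ref{eq:F3_2})--(\ref{eq:F3_6}) reduce to routine algebra and standard generating-function identities.
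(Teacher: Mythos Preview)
Your proposal is correct and follows essentially the same approach as the paper: represent each point by its RREF matrix, classify the admissible tail columns after the second pivot, deduce the ordering constraint on the non-zero column types, and count by marking distinguished positions to obtain (\ref{eq:F3_1}), then derive (\ref{eq:F3_2})--(\ref{eq:F3_6}) by the same standard manipulations (the paper invokes automated summation for (\ref{eq:F3_3}) where you instead propose verifying the recurrence of Lemma~\ref{lem:rec}). The only cosmetic difference is the choice of markers---the paper splits on the presence of the columns $(0,1)^T$ and $(-1,0)^T$, using the \emph{last} $(0,1)^T$ and \emph{first} $(-1,0)^T$ as $\ell$ and $m$, while you split on the presence of $(-1,1)^T$ and $(-1,0)^T$ using their \emph{first} occurrences---but both decompositions produce exactly the three summands of (\ref{eq:F3_1}).
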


\begin{proof}
We will work to prove Equation~(\ref{eq:F3_1}) in the theorem from which the others will follow without much work.
Let $A$ be a $2 \times n$ matrix in RREF so that $[A] \in \Gr^{\geq 0}_{2,n}(\F_3)$.
Note that $\F_3 = \{-1,0,1\}$ and $0$ and $1$ are the only nonnegative elements.
We will let $1 \leq i < j \leq n$ denote the pivots of $A$.
Since
\[\det\left(\begin{bsmallmatrix} 1 & * \\ 0 & -1 \end{bsmallmatrix}\right) = \det\left(\begin{bsmallmatrix} 0 & 1 \\ 1 & * \end{bsmallmatrix}\right) = -1\]
it follows that $\begin{bsmallmatrix} * \\ -1 \end{bsmallmatrix}$ and $\begin{bsmallmatrix} 1 \\ *\end{bsmallmatrix}$ cannot occur as columns of $A$ after $i$ and $j$ respectively.

Now we discuss further restrictions on the form of the matrix.
Note that we have 
\[\det\left(\begin{bsmallmatrix} -1 & * \\ 0 & 1 \end{bsmallmatrix}\right) = \det\left(\begin{bsmallmatrix} -1 & 0 \\ * &  1 \end{bsmallmatrix}\right) = -1\]
that need to be avoided.
This means that neither $\begin{bsmallmatrix} -1 \\ 0\end{bsmallmatrix}$ nor $\begin{bsmallmatrix} -1 \\ 1\end{bsmallmatrix}$ can occur before any $\begin{bsmallmatrix} 0 \\ 1\end{bsmallmatrix}$ while neither $\begin{bsmallmatrix} 0 \\ 1\end{bsmallmatrix}$ nor $\begin{bsmallmatrix} -1 \\ 1\end{bsmallmatrix}$ can occur after $\begin{bsmallmatrix} -1 \\ 0\end{bsmallmatrix}$ in the matrix.

We are now ready to summarize the form the matrix $A$ can take in order to obtain our count.
Let $1 \leq i < j < \ell < m$ where $i$ and $j$ denote the pivots and the last occurrence (distinct from the pivot $j$) and first occurrence of $\begin{bsmallmatrix} 0 \\ 1\end{bsmallmatrix}$ and $\begin{bsmallmatrix} -1 \\ 0\end{bsmallmatrix}$ are denoted by $\ell$ and $m$ respectively.
We are assuming both $\begin{bsmallmatrix} 0 \\ 1\end{bsmallmatrix}$ and $\begin{bsmallmatrix} -1 \\ 0\end{bsmallmatrix}$ occur, and if this is the case then all instances of the former must come before the latter.
When one or both of $\ell$ and $m$ are not present, the count is handled similarly, and we will discuss this later.
Now we must examine the choices for all columns which are not $i$, $j$, $\ell$, or $m$.
The only choice for columns before $i$ is $\begin{bsmallmatrix} 0 \\ 0\end{bsmallmatrix}$ while $\begin{bsmallmatrix} 0 \\ 0\end{bsmallmatrix}$ or $\begin{bsmallmatrix} 1 \\ 0\end{bsmallmatrix}$ describe all $j-i-1$ choices between $i$ and $j$.
For columns between $j$ and $\ell$, $\begin{bsmallmatrix} 0 \\ 0\end{bsmallmatrix}$ or $\begin{bsmallmatrix} 0 \\ 1\end{bsmallmatrix}$ cover all $\ell - j - 1$ possible choices.
Between columns $\ell$ and $m$, $\begin{bsmallmatrix} 0 \\ 0\end{bsmallmatrix}$ or $\begin{bsmallmatrix} -1 \\ 1\end{bsmallmatrix}$
are the choices available.
After column $m$ we have $\begin{bsmallmatrix} 0 \\ 0\end{bsmallmatrix}$ or $\begin{bsmallmatrix} -1 \\ 0\end{bsmallmatrix}$ for each of the $n - m$ choices.
In total this gives us
\[2^{j-i-1} 2^{\ell-j-1} 2^{m - \ell - 1} 2^{n-m} = 2^{n-i-3}\]
such matrices for fixed $1 \leq i < j < \ell < m \leq n$.
Summing over all possible choices for $1 \leq i < j < \ell < m \leq n$ gives the first term in the theorem.

If exactly one of $\ell$ or $m$ is missing meaning that after the pivot $j$ one of $\begin{bsmallmatrix} 0 \\ 1\end{bsmallmatrix}$ or  $\begin{bsmallmatrix} -1 \\ 0\end{bsmallmatrix}$ does not occur, then we have
\[\sum_{1 \leq i < j < \ell \leq n} 2^{n-i-2} = \sum_{1 \leq i < j < m \leq n} 2^{n-i-2}\]
such matrices.
Doubling this quantity, because of the choice of which column to avoid, gives the second term in the theorem.
The last term comes from when neither $\ell$ nor $m$ is present and is computed similarly.
Summing over all these cases gives Equation~(\ref{eq:F3_1}).

For Equation~(\ref{eq:F3_2}) in the theorem, we use the formula in Equation~(\ref{eq:F3_1}) where we see the terms in the summations only depend on $n$ and $i$.
Hence, in the first term once the first pivot $i$ is chosen we may choose any distinct $j,\ell,m \in \{i+1, i+2, \dots, n\}$  with $j < \ell < m$ contributing $2^{n-i-3}$ each time.
There are the evidently $\binom{n-i}{3}$ ways to choose $j$,$k$, and $\ell$. The other terms are similar.
One way Equation~(\ref{eq:F3_3}) can be obtained from Equation~(\ref{eq:F3_2}) is by using automated methods (e.g., Maple~\cite{maple} or Mathematica~\cite{Mathematica}) for summing and simplifying sums with binomial coefficients.
Letting $b_n = [x^n]\left(\frac{1-x}{(1-2x)^4}\right)$ for $n \geq 0$, since $(1-2x)^4 = 1 - 8x + 24x^2 - 32x^3 + 16x^4$ it follows that we have the recurrence $b_n - 8b_{n-1} + 24b_{n-2} - 32b_{n-3} + 16b_{n-4} = 0$ for $n \geq 4$.
Thus, Equation~(\ref{eq:F3_4}) is implied by Equation~(\ref{eq:F3_3}) and Lemma~\ref{lem:rec} after accounting for the initial conditions and shift of indices.
Equation~(\ref{eq:F3_5}) is obtained from Lemma~\ref{lem:tile} and Equation~(\ref{eq:F3_3}) again with indices shifted by $2$.
Equation~(\ref{eq:F3_6}) follows by making use of the known generating function for the Chebyshev polynomials of the first kind.
\end{proof}

\begin{remark}
The enumeration found in Theorem~\ref{thm:2nF3} is a simple shift of seqeunce~A001794 in the OEIS~\cite{oeis}.
Starting from the sequence given by~(\ref{eq:F3_5}), the generating function in~(\ref{eq:F3_4}) was found by Plouffe~\cite{plouffe}.
\end{remark}

\subsection{The field $\F_5$}
In this subsection, we consider the field $\F_5$ where we have $\F_5^{>0} = \{1,4\}$ and $\F_5^{<0} = \{2,3\}$.
Hence, in $\F_5$ we have $4=-1 > 0$.
We first have a lemma on $2 \times 2$ determinants of matrices of positive elements in $\F_5$.

\begin{lemma}
Given $a,b \in \F_5^{> 0}$, then for $c,d \in \F_5^{>0}$ we have $ad-bc \geq 0$ if and only if $a=c$ and $b=d$ or else $a=-c$ and $b=-d$.
\end{lemma}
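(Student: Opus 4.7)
The plan is to reduce the positivity condition on $ad-bc$ to the equation $ad = bc$, and then classify its solutions directly.

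First I would use $\F_5^{>0} = \{1,4\} = \{1,-1\}$ to enumerate the possible values of $ad - bc$ when $a,b,c,d \in \{1,-1\}$. These values are $0$ and $\pm 2$, i.e. the elements $0, 2, 3$ of $\F_5$. Since the set of squares in $\F_5$ is $\{0,1,4\}$, neither $2$ nor $3$ is nonnegative, so the condition $ad - bc \geq 0$ collapses to the stronger condition $ad - bc = 0$, that is, $ad = bc$.

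Next I would solve $ad = bc$ under the constraint that all four variables lie in $\{1,-1\}$. Since $a$ and $c$ both lie in $\{1,-1\}$, we have $a = c$ or $a = -c$. In the first case, cancelling $a = c \neq 0$ from $ad = bc$ yields $d = b$, so $(a,b) = (c,d)$. In the second case, substituting $a = -c$ gives $-cd = bc$, hence $b = -d$, so $(a,b) = (-c,-d)$. Conversely, both configurations give $ad - bc = 0 \in \F_5^{\geq 0}$, which handles the ``if'' direction.

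I do not expect any real obstacle in this lemma; the only substantive input is the identification of the squares of $\F_5$, which is already a standing fact in the paper. A purely brute-force alternative — checking all $16$ tuples in $\{1,-1\}^4$ — would give the same result, but the approach via $ad = bc$ is more transparent and generalizes more cleanly if one later wants to treat $\F_q$ for larger $q$ where $\F_q^{>0}$ is no longer $\{1,-1\}$.
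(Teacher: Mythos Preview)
Your argument is correct. The paper's own proof is literally a one-line appeal to brute force: since $\F_5^{>0}=\{1,4\}$, all $16$ tuples can be checked directly. Your route is a mild but genuine refinement of this: you first observe that $ad-bc$ can only take the values $0,2,3$ in $\F_5$, so nonnegativity forces $ad=bc$, and then you solve that equation in $\{1,-1\}$ by the dichotomy $a=\pm c$. This buys you a cleaner logical structure and, as you note, a template that could in principle be adapted to other $\F_q$; the paper's version buys only brevity. Either way the content is the same finite verification, so there is no gap.
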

\begin{proof}
Since $\F_5^{>0} = \{1,4\}$ the lemma can be verified by finitely many computations.
\label{lem:2nF5}
\end{proof}

\begin{theorem}
For any $n \geq 2$
\begin{align}
    |\Gr_{2,n}^{\geq 0}(\F_5)| &= \sum_{1 \leq i < j \leq n} 3^{j-i-1} 5^{n-j} + \sum_{1 \leq i < j < \ell \leq n} 4 \cdot 3^{j-i-1} 5^{\ell-j-1} 7^{n-\ell}\label{eq:F5_1}\\\
     |\Gr_{2,n}^{\geq 0}(\F_5)| &= \frac{2\cdot 7^n - 3 \cdot 5^n + 1}{24}\label{eq:F5_2}\\
    |\Gr_{2,n}^{\geq 0}(\F_5)| &= [x^{n-2}] \left(\frac{1}{(1-x)(1-5x)(1-7x)}\right)\label{eq:F5_3}
\end{align}
all give the point count of the totally nonnegative finite Grassmannian with $k=2$ over $\F_5$.
 \label{thm:2nF5}   
\end{theorem}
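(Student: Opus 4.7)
The plan is to follow the strategy used for $\F_3$ in Theorem~\ref{thm:2nF3}: fix a RREF matrix $A$ with pivots at positions $1 \le i < j \le n$ representing an element of $\Gr_{2,n}^{\geq 0}(\F_5)$, determine which columns are admissible under the nonnegativity constraints on the Pl\"ucker coordinates, and then count. Here $\F_5^{\geq 0}=\{0,1,4\}$ (since $4=-1=2^2$) and $\F_5^{>0}=\{1,4\}$. Columns strictly before pivot $i$ are forced to be $(0,0)^T$. The $j-i-1$ columns strictly between the pivots have the form $(a_r,0)^T$, and $\Delta_{\{r,j\}}=a_r\ge 0$ forces $a_r\in\F_5^{\geq 0}$, giving the factor $3^{j-i-1}$. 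Each of the $n-j$ columns after pivot $j$ has the form $(a_s,b_s)^T$, and the constraints $\Delta_{\{i,s\}}=b_s\ge 0$ together with $\Delta_{\{j,s\}}=-a_s\ge 0$ (using that $-1$ is a square in $\F_5$) force $a_s,b_s\in\F_5^{\geq 0}$.

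The delicate step is analyzing the pairwise constraints $\Delta_{\{r,s\}}=a_rb_s-a_sb_r\ge 0$ between two columns $r<s$ both occurring after pivot $j$. A short case analysis shows this is automatic whenever at least one of the two columns has a zero entry, since a product of nonnegatives is nonnegative and $-1$ is a square. The only interesting case is when both columns have entries in $\F_5^{>0}=\{1,4\}$; Lemma~\ref{lem:2nF5} then forces the two columns to lie on one of the two ``lines'' $L_1=\{(1,1),(4,4)\}$ or $L_2=\{(1,4),(4,1)\}$, which are the two $\pm$-orbits in $\F_5^{>0}\times\F_5^{>0}$. Because this is a genuine equivalence relation, all strictly positive columns appearing after pivot $j$ must lie on a single common line $L_1$ or $L_2$.

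Equation~\eqref{eq:F5_1} now follows by stratifying the $n-j$ columns after pivot $j$ by whether a strictly positive column appears and, if so, by the position $\ell$ of the first such. If none appears, there are $5^{n-j}$ arrangements, corresponding to the $5$ ``degenerate'' options $(0,0)^T,(1,0)^T,(4,0)^T,(0,1)^T,(0,4)^T$ at each position. If the first strictly positive column sits at $\ell$, the stratum contributes $5^{\ell-j-1}\cdot 4\cdot 7^{n-\ell}$: there are $5$ degenerate choices per position before $\ell$, $4$ choices at $\ell$ (two lines times two values each), and $7^{n-\ell}$ choices per position after $\ell$ (the $5$ degenerate options plus the $2$ values of the chosen line). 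Multiplying by $3^{j-i-1}$ and summing over $1\le i<j\le n$ and $j<\ell\le n$ yields Equation~\eqref{eq:F5_1}.

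The remaining two equations are algebraic consequences. For Equation~\eqref{eq:F5_2} I would evaluate the nested sums in~\eqref{eq:F5_1} by repeated geometric-series summation; the $3^n$ contributions cancel and the count collapses to $\frac{2\cdot 7^n-3\cdot 5^n+1}{24}$. For Equation~\eqref{eq:F5_3} I would apply partial fractions to write $\frac{1}{(1-x)(1-5x)(1-7x)}=\frac{1/24}{1-x}-\frac{25/8}{1-5x}+\frac{49/12}{1-7x}$ and read off $[x^{n-2}]$, which reproduces~\eqref{eq:F5_2}. The main obstacle is the column-pair analysis after pivot $j$: one must recognize that only the strictly positive columns interact nontrivially, and then use Lemma~\ref{lem:2nF5} to confirm that the resulting compatibility relation is transitive, so that all such columns really lie on a single common line.
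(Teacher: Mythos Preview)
Your proposal is correct and follows essentially the same route as the paper's own proof: RREF with pivots at $i<j$, the factor $3^{j-i-1}$ from the columns between the pivots, and a stratification of the columns after $j$ by the position $\ell$ of the first column with both entries in $\F_5^{>0}$, invoking Lemma~\ref{lem:2nF5} to get the $7$ compatible choices thereafter. If anything you are slightly more explicit than the paper in checking that mixed pairs (one column with a zero entry) impose no extra constraint and in noting that the compatibility among strictly positive columns is transitive; and you propose to sum the geometric series by hand where the paper simply appeals to a computer algebra system, but neither of these constitutes a genuinely different approach.
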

\begin{proof}
    Let $A$ be a $2 \times n$ matrix in RREF over $\F_5$ so that $[A] \in \Gr^{\geq 0}_{2,n}(\F_5)$.
    Let $1 \leq i < j \leq n$ denote the columns of $A$ which are pivots.
    There are exactly two pivots since $A$ is of full rank.
    We find that all entries of $A$ must be from $\F_5^{\geq 0}$.
    This is because given a column $\begin{bsmallmatrix} a \\ b \end{bsmallmatrix}$ of $A$, the minor when paired with the pivot $\begin{bsmallmatrix} 1 \\ 0 \end{bsmallmatrix}$ will be $b$, while the minor when paired with the pivot $\begin{bsmallmatrix} 0 \\ 1 \end{bsmallmatrix}$ will be $\pm a$.
    Since $[A] \in \Gr^{\geq 0}_{k,n}(\F_5)$ we must have $b \geq 0$ and $\pm a \geq 0$.
    From being in $\F_5$ it implies $a \geq 0$ is equivalent to $-a \geq 0$.

    First, assume $A$ does not contain any column $\begin{bsmallmatrix} a \\ b \end{bsmallmatrix}$ with $a \neq 0$ and $b \neq 0$.
    All columns before $i$ must be $\begin{bsmallmatrix} 0 \\ 0 \end{bsmallmatrix}$.
    Columns between $i$ and $j$ must be of the form $\begin{bsmallmatrix} a \\ 0 \end{bsmallmatrix}$ with $a \in \F_5^{\geq 0}$.
    Hence, we have $3^{j-i-1}$ choices for such columns.
    Columns after $j$ can be of the form $\begin{bsmallmatrix} a \\ 0 \end{bsmallmatrix}$ or $\begin{bsmallmatrix} 0 \\ a \end{bsmallmatrix}$ $a \in \F_q^{\geq 0}$.
    For these columns, we have $5^{n-j}$ choices.
    Thus, we have
    \[\sum_{1 \leq i < j \leq n} 3^{j-i-1} 5^{n-j}\]
    such matrices $A$ which do not contain a column $\begin{bsmallmatrix} a \\ b \end{bsmallmatrix}$ with $a \neq 0$ and $b \neq 0$.

    Next, let $\ell$ be the first column where we have $\begin{bsmallmatrix} a \\ b \end{bsmallmatrix}$ with both $a \neq 0$ and $b \neq 0$.
    Then similar to the previous case we have $3^{j-i-1} 5^{\ell-j-1}$ choices for columns before $\ell$.
    There are $4$ choices for column $\ell$.
    Columns after $\ell$ must be $\begin{bsmallmatrix} c \\ d \end{bsmallmatrix}$ such that $c,d, ad-bc \geq 0$.
    By Lemma~\ref{lem:2nF5} there are $2$ choices with both $c \neq 0$ and $d \neq 0$ and there are $5$ choices we either $c=0$ or $d=0$ recalling that all entries must come from $\F^{\geq 0}_5$.
    This gives 
    \[\sum_{1 \leq i < j < \ell \leq n} 3^{j-i-1} 5^{\ell-j-1}\cdot 4 \cdot 7^{n-\ell}\]
    choices.

    This proves Equation~(\ref{eq:F5_1}), and from this we can obtain Equation~(\ref{eq:F5_2}) again by using automated methods (e.g., Maple~\cite{maple} or Mathematica~\cite{Mathematica}).
    Then Equation~(\ref{eq:F5_3}) can be obtained from Equation~(\ref{eq:F5_2}) using the partial fraction decomposition
    \[\frac{1}{(1-x)(1-5x)(1-7x)} = \frac{49}{12(1-7x)} - \frac{25}{8(1-5x)} + \frac{1}{24(1-x)}\]
    thus completing the proof of the theorem.
\end{proof}

\begin{remark}
One can find that the sequence enumerated in Theorem~\ref{thm:2nF5} as sequence A016230 in the OEIS~\cite{oeis} shifted by $2$.
\end{remark}

\section{Examples and Counterexamples}\label{sec:examples}

In this section, we record some examples demonstrating that some properties known about the real totally nonnegative Grassmannian do not hold in the finite field case.

\subsection{Sign variation}
Let $\F$ be a finite field or the real field, then given $v = (v_1, v_2, \dots, v_n) \in \F^n$ we define its \emph{sign variation}, denoted $\var(v)$, to be the number times $v$, viewed as a sequence of elements of $\F$ while ignoring zeros, switches from positive to negative or vice versa.
As examples we have
\begin{align*}
    u &= (1,0,-1,1,-2) \in \mathbb{R}^5 & \var(u) &= 3\\
    v &= (1,0,2,3,4) \in \mathbb{F}_5^5 & \var(v) &= 2
\end{align*}
where $\var(u)$ can be seen from ``sign flips'' while for $\var(v)$ we recall $1 > 0$, $2 < 0$, $3 < 0$, and $4 > 0$ in $\F_5$.
It is known~\cite{GK} that
\[\Gr^{\geq 0}_{k,n}(\mathbb{R}) = \{V \in \Gr_{k,n}(\mathbb{R}) : \var(v) \leq k-1 \text{ for all } v \in V\}\]
in the real field case.
We refer the reader to Karp~\cite{karp} for uses of sign variation in the study of $\Gr^{\geq 0}_{k,n}(\mathbb{R})$.
We now give examples that show inequalities on sign variation fail to characterize totally nonnegative Grassmannian for finite fields.
This should not be surprising since, though it respects multiplication, our notion of positivity in finite fields does not respect addition.

\begin{example}
Consider $[A] \in \Gr^{\geq 0}_{2,4}(\F_3)$ where
\[ A = \begin{bmatrix}1 & 0 & 2 & 2 \\ 0 & 1 & 1 & 0\end{bmatrix}\]
so $u = (1,2,1,2) \in [A]$ is a linear combination of the rows of $A$.
We see that $\var(u) = 3$.
Also consider $[B] \in \Gr_{2,4}(\F_3)$ where 
\[ B = \begin{bmatrix}0 & 1 & 0 & 1 \\ 0 & 0 & 1 & 1\end{bmatrix}\]
so that $[B] \not\in \Gr^{\geq 0}_{2,4}(\F_3)$ since the maximal minor using the last two columns is negative.
However, it can be checked that $\var(v) \leq 1$ for all $v \in [B]$.   
\end{example}

\subsection{Cyclic shifts and fixed points}
Let $\F$ be any field.
We define $\sigma_{k,n}: \F^n \to \F^n$ by $\sigma_{k,n}((v_1, v_2, \dots, v_n)) = (v_2, v_3, \dots, v_n, (-1)^{k-1}v_1)$.
For $V \in \Gr_{k,n}(\F)$ we let $\sigma_{k,n}(V) = \{\sigma_{k,n}(v) : v \in V\}$.
When $\F$ is a finite field or the real field, it is the case that $\sigma_{k,n}(V) \in \Gr^{\geq 0}_{k,n}(\F)$ whenever $V \in \Gr^{\geq 0}_{k,n}(\F)$.
This is because $V$ and $\sigma_{k,n}(V)$ have as a set the same Pl\"ucker coordinates since the index set of a given Pl\"ucker coordinate is just ``rotated'' by the action of $\sigma_{k,n}$.

Karp~\cite{KarpCyc} showed that in $\Gr_{k,n}(\mathbb{C})$ the action of $\sigma_{k,n}$ has exactly $\binom{n}{k}$ fixed points and there is a unique fixed point in $\Gr_{k,n}^{\geq 0}(\mathbb{R})$.
This result uses that $\sigma_{k,n}$ fixes the span of the vector $(1,z,z^2, \dots, z^{n-1})$ where $z$ is an $n$th root of $(-1)^{k-1}$.
In $\F_q$, some of this can be mimicked in our setting, but generally, these results fail.
We now give examples showing there is not always a unique fixed point in $\Gr^{\geq 0}_{k,n}(\F_q)$.

\begin{example}
    The only fixed points of $\sigma_{2,4}$ in $\Gr_{2,4}(\F_5)$ are the row spans of
    \begin{align*}
        \begin{bmatrix} 1 & 0 & 2 & 0 \\ 0  & 1 & 0 & 2\end{bmatrix} && \text{ and } && \begin{bmatrix} 1 & 0 & 3 & 0 \\ 0  & 1 & 0 & 3\end{bmatrix}
    \end{align*}
    neither of which are in $\Gr^{\geq 0}_{2,4}(\F_5)$.
    By Theorem~\ref{thm:sub} we can view the same two matrices as defining fixed points both in $\Gr^{\geq 0}_{2,4}(\F_{25})$.
\end{example}

\begin{example}
    The spans of each of the vectors
    \begin{align*}
        (1, 1, 1, 1, 1, 1) && (1, 3, 9, 1, 3, 9) && (1, 4, 3, 12, 9, 10)\\[0.65em]
        (1, 9, 3, 1, 9, 3) && (1, 10, 9, 12, 3, 4) && (1, 12, 1, 12, 1, 12)  
    \end{align*}
     are all fixed points of $\sigma_{1,6}$ in $\Gr_{1,6}^{\geq 0}(\F_{13})$.
     Notice each vector here is of the form $(1,z,z^2,z^3,z^4,z^5)$ for a $6$th root of unity $z$ in $\F_{13}$.
     By Lemma~\ref{lem:>0}, $\F_{13}^{> 0}$ is exactly the set of $6$th roots of unity.
\end{example}

\subsection{Matroids}
Given any $V \in \Gr_{k,n}(\F)$ for a field $\F$ one can produce an \emph{$\F$-linear matroid} denoted $M_V$ defined by
\[M_V = \left\{ I \in \binom{[n]}{k} : \Delta_I(V) \neq 0\right\}\]
where the elements $I \in M_V$ are known as \emph{bases} of the matroid.
If $V \in \Gr^{\geq 0}_{k,n}(\R)$, then $M_V$ is what Postnikov~\cite{Pos} calls a \emph{positroid}.
We define an \emph{$\F_q$-postroid} to be an $\F_q$-linear matroid $M_V$ for some $V \in \Gr^{\geq 0}_{k,n}(\F_q)$.
Let us give a small example showing how the theory of $\F_q$-positroids differs from that of $\R$-positroids.

\begin{example}
    Consider the matroid $M = \{\{1,2\},\{2,3\},\{3,4\},\{1,4\}\}$. If $V$ is the row span of
    \[\begin{bmatrix}1 & 0 & a & 0 \\ 0 & 1 & 0 & b \end{bmatrix}\]
    we have $M = M_V$ as an $\F$-linear matroid taking any $a,b \in \F^{\times}$.
    In order for $M = M_V$ to be an $\F$-positroid we need $-a > 0$, $b > 0$, and $ab > 0$.
    This can happen only when $-1 > 0$.
    Thus $M = M_V$ is not an $\F$-postroid when $\F = \R$ nor is it for certain finite fields, e.g.\ $\F = \F_3$.
    However, $M = M_V$ is an $\F$-postroid for some other finite fields, e.g.\ $\F = \F_5$.
\end{example}

One immediate observation we can make about $\F_q$-positroids is the following consequence of Lemma~\ref{lem:altprep}.
Note the \emph{dual matroid} is exactly the matroid whose bases are complements of the bases of the original matroid.

\begin{corollary}
    The dual of an $\F_q$-positroid is an $\F_q$-positroid.
\end{corollary}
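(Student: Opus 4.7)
The plan is to produce, for a given $V \in \Gr^{\geq 0}_{k,n}(\F_q)$ with positroid $M_V$, an explicit witness $W \in \Gr^{\geq 0}_{n-k,n}(\F_q)$ whose matroid $M_W$ is exactly the dual of $M_V$. The natural candidate is $W = \alt(V^{\perp})$, since this map is precisely the one already studied in Section~\ref{sec:orth}.

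First I would set $W = \alt(V^{\perp})$ and invoke Theorem~\ref{thm:alt} to conclude immediately that $W \in \Gr^{\geq 0}_{n-k,n}(\F_q)$, so that $M_W$ qualifies as an $\F_q$-positroid. Next I would compute its bases: by Lemma~\ref{lem:altprep}, there is a fixed $\alpha \in \F_q^{\times}$ such that $\Delta_I(W) = \alpha\,\Delta_{[n]\setminus I}(V)$ for every $I \in \binom{[n]}{n-k}$ (after choosing matrix representatives). Since $\alpha \neq 0$, we have $\Delta_I(W) \neq 0$ if and only if $\Delta_{[n]\setminus I}(V) \neq 0$, so
\[
M_W \;=\; \bigl\{\,I \in \tbinom{[n]}{n-k} : \Delta_I(W) \neq 0 \,\bigr\} \;=\; \bigl\{\,[n]\setminus J : J \in M_V \,\bigr\},
\]
which is by definition the dual matroid of $M_V$.

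Combining the two displayed facts finishes the proof: the dual of $M_V$ is realized as $M_W$ for an explicit $W$ lying in $\Gr^{\geq 0}_{n-k,n}(\F_q)$. There is no real obstacle here since both ingredients are already in hand; the only small subtlety is to note that the scaling factor $\alpha$ in Lemma~\ref{lem:altprep} is a single nonzero element common to all Plücker coordinates, so it never interferes with the vanishing/nonvanishing pattern that defines the matroid. In particular, one does not need to worry about signs, positivity, or the choice of representative for $V$ at this step — the corollary follows from the projective identity of Plücker coordinates together with the bijection of Theorem~\ref{thm:alt}.
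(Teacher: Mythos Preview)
Your proof is correct and is essentially the same approach the paper takes: the paper simply records the corollary as an immediate consequence of Lemma~\ref{lem:altprep}, and your argument spells out exactly how that lemma (via the map $\alt\circ\perp$ of Theorem~\ref{thm:alt}) produces a totally nonnegative witness whose matroid is the dual.
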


Another observation we can make about $\F_q$-positroids is the following consequence of Theorem~\ref{thm:sub}.

\begin{corollary}
    If $s = 2mr$ for an integer $m \geq 1$ and $p$ is an odd prime, then any $\F_{p^r}$-linear matroid is an $\F_{p^s}$-positroid.
\end{corollary}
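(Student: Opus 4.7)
The plan is to apply Theorem~\ref{thm:sub} essentially directly, observing that the matroid $M_V$ is preserved under the subfield inclusion $\F_{p^r} \hookrightarrow \F_{p^s}$. Given an $\F_{p^r}$-linear matroid $M$, by definition there exists $V \in \Gr_{k,n}(\F_{p^r})$ with $M = M_V$. Choose any matrix representative $A$ for $V$, whose entries lie in $\F_{p^r}$.

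Next, I would view the same matrix $A$ as a matrix over $\F_{p^s}$, using the inclusion $\F_{p^r} \subseteq \F_{p^s}$; this is precisely the natural injective map from Theorem~\ref{thm:sub}, and it places $[A]$ inside $\Gr_{k,n}^{\geq 0}(\F_{p^s})$. Let $V'$ denote this image. The key observation is that every Pl\"ucker coordinate $\Delta_I(A)$ is a polynomial (in fact a determinant) in the entries of $A$, so it has the same value whether computed in $\F_{p^r}$ or in $\F_{p^s}$. In particular, $\Delta_I(V') = 0$ in $\F_{p^s}$ if and only if $\Delta_I(V) = 0$ in $\F_{p^r}$, since the inclusion is a ring homomorphism (and injective). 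Hence $M_{V'} = M_V = M$, exhibiting $M$ as an $\F_{p^s}$-positroid.

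There is essentially no obstacle here; the work is all contained in Theorem~\ref{thm:sub} (which itself rests on Lemma~\ref{lem:subfield}). The only subtlety worth mentioning is making explicit that the bases of an $\F$-linear matroid depend only on the \emph{vanishing pattern} of the Pl\"ucker coordinates, and this pattern is preserved under any injective field extension, so moving from $\F_{p^r}$ up to $\F_{p^s}$ cannot alter the matroid. Combined with the nonnegativity of all elements of $\F_{p^r}$ as elements of $\F_{p^s}$ (Lemma~\ref{lem:subfield}), this yields the corollary.
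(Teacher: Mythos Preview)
Your proposal is correct and matches the paper's approach exactly: the paper simply records this as an immediate consequence of Theorem~\ref{thm:sub}, and your argument spells out precisely why---the natural inclusion $\Gr_{k,n}(\F_{p^r}) \hookrightarrow \Gr_{k,n}^{\geq 0}(\F_{p^s})$ preserves the vanishing pattern of the Pl\"ucker coordinates, hence the matroid.
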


\section*{Acknowledgements}
The author wishes to thank Nick Ovenhouse for helpful comments, and also thanks the anonymous referee for their careful reading and valuable suggestions.

\bibliographystyle{alphaurl}
\bibliography{refs}

\end{document}